\documentclass[review, authoryear]{elsarticle}
\usepackage[utf8]{inputenc}

\usepackage[table]{xcolor}
\setlength{\arrayrulewidth}{0.1mm}
\setlength{\tabcolsep}{6pt}
\definecolor{gray1}{gray}{0.95}
\definecolor{gray2}{gray}{0.975}
\usepackage{booktabs}
\usepackage{multirow}

\usepackage{mathtools}
\usepackage{amssymb}
\usepackage{amsthm}

\newtheorem{assumption}{Assumption}

\usepackage[colorlinks=true, citecolor=red, breaklinks=true]{hyperref}

\journal{arXiv}
\bibliographystyle{elsarticle-harv}

\newcommand{\ts}{\textsuperscript}
\newcommand{\st}{\text{subject to}}
\renewcommand{\leq}{\leqslant}
\renewcommand{\geq}{\geqslant}
\newcommand{\eps}{\varepsilon}

\renewcommand{\phi}{\varphi}
\newcommand{\outarcs}{\delta^{\text{out}}}
\newcommand{\inarcs}{\delta^{\text{in}}}

\usepackage{algorithm}
\usepackage[noend]{algpseudocode}

\newtheorem{lemma}{Lemma}[section]

\theoremstyle{definition}

\usepackage[margin=1in]{geometry}

\usepackage{graphicx}
\graphicspath{ {./images/} }

\usepackage{subcaption}
\DeclareCaptionFormat{custom}
{\textbf{#1#2}\textit{\small #3}}\captionsetup{format=custom}

\begin{document}

\begin{frontmatter}
\title{Logic-Based Benders Decomposition for Wildfire Suppression}

\author{Mitchell G Harris$^\dagger$}
\author{\quad Michael A Forbes}
\author{\quad Thomas Taimre}
\address{School of Mathematics and Physics, The University of Queensland, Australia}

\begin{abstract}
We study the problem of locating fire suppression resources in a burning landscape in order to minimise the total area burned.
The landscape is modelled as a directed graph, with nodes representing regions of the landscape, and arcs representing adjacency relationships.
The fire spread is modelled using the minimum travel time principle.
We propose a non-linear integer programming formulation and an exact solution approach utilising logic-based Benders decomposition. We benchmark the approach against a mixed integer program and an iterated local search metaheuristic from the literature. We are able to solve challenging instances to proven optimality in a reasonable amount of time.

\bigskip
\end{abstract}

\begin{keyword}
	integer programming; wildfires; network interdiction; logic-based Benders decomposition
\end{keyword}

\end{frontmatter}
\noindent Accompanying GitHub: \href{https://tinyurl.com/fs6yjwsx}{https://tinyurl.com/fs6yjwsx}

\noindent $^\dagger$Corresponding Author email: \href{mitchopt@gmail.com}{mitchopt@gmail.com}

\allowdisplaybreaks

\bigskip
\section{Introduction}
\label{sec:Intro}Wildfires can have devastating consequences for wildlife, the environment, and human lives.
The 2019/20 Australian wildfires covered an area greater than 17 million hectares, destroyed 3094 houses, and led to at least 43 direct deaths; \cite{Parliament}.
Conservative estimates moreover suggest that at least one billion mammals, birds, and reptiles were killed.
Wildfires also lead to significant greenhouse gas emissions, contributing to climate change.
For these and other reasons, governments in Australia and around the world invest heavily in various suppression resources and fire fighting strategies, with the goal of mitigating the damage caused by wildfires.

In this paper we study the problem of locating suppression resources in a burning landscape to minimise the area burned.
Before describing the problem, we clarify some notation;
$\mathbb{R}_+$ denotes the non-negative reals, while $\mathbb{R}_{++}$ denotes the positive reals; likewise $\mathbb N_+$ and $\mathbb N_{++}$ for integers.
We use $(N, A,\, c)$ as shorthand for a directed graph with sets $N$ of nodes, $A$ of arcs, and weights $c_a \in \mathbb{R}$ for $a \in A$. We use ``network'' as a synonym for ``directed graph.''
A superscript $*$ denotes an optimal solution to an optimisation problem, and $^\prime$ a specific feasible, but not necessarily optimal solution.
The wedge operator $\wedge$ represents logical ``and.''

\subsection{Problem Description}\label{subsec:problemdescr}

We are given a sparse network $(N, A)$, where
the nodes $N$ represent regions in a landscape, and the arcs $A \subset N\times N$ are adjacency relationships.
In practice $(N, A)$ arises from a partition of the underlying landscape into a union of polygonal regions.
In principle, we can accommodate a cellular decomposition of any topology and resolution.
For each arc $ij \in A$, a weight $c_{ij} \in \mathbb{R}_{++}$ is the time taken for the fire to spread from node $i$ to the adjacent node $j$.
The time taken for fire to spread from a node $i \in N$ to any other node $j \in N$ is equal to the weight of the shortest path from $i$ to $j$.
We are given a subset $I \subset N$ of ignition nodes, and say that the fire arrived at the ignition nodes at time $0$. We are given a constant $\psi \in \mathbb{R}_{++}$ and we say that a node is protected if the fire arrives at that node strictly later than $\psi$. 
We are given a finite set $T \subset [0, \psi)$ of times, and for each $t \in T$, up to $a_t \in \mathbb{N}_{++}$ suppression resources become available at that time.
Another constant, $\Delta \in \mathbb{R}_{++}$, represents the delay induced by a resource.
In other words, if there is a resource at node $i \in N$, then the length of every arc that leaves $i$ is increased by $\Delta$.
We make the following additional assumption:
\begin{assumption}\label{assume:notonfire}
We may not locate a resource at a node that is already on fire.
\end{assumption}
\noindent
The threshold $\psi$, which we call the arrival time target, may represent the time taken for people to evacuate, for additional fire crews to arrive, or something else.
In any case, the goal is to maximise the number of protected nodes; or equivalently to minimise the number of unprotected nodes.

We can think of the problem as a two-person attacker--defender game.
The defender (\textit{us}) makes a set of interdiction decisions; that is, we locate suppression resources.
The attacker (\textit{the fire}) then solves a shortest path tree problem in the resulting network.
The goal of the defender is to make interdiction decisions that minimise the number of nodes reached by an optimal attack within the threshold time.
An interdiction policy can be considered feasible only if it satisfies Assumption \ref{assume:notonfire} with respect to the optimal attack.

Consider the following example with $16$ nodes in a square grid. For each node there is an arc to each of its orthogonally adjacent nodes. There is a single ignition node. We have $\psi = 28$ and $\Delta = 50$. The node labels are the fire arrival times. The left tree shows arrival times with no interdiction. On the right, two suppression resources become available after $5$ minutes.  Allocating them (\textit{red nodes}) changes the arrival times and and fire paths to some nodes, ultimately protecting four of them.

\medskip
\begin{center}
	\includegraphics[width=.25\linewidth]{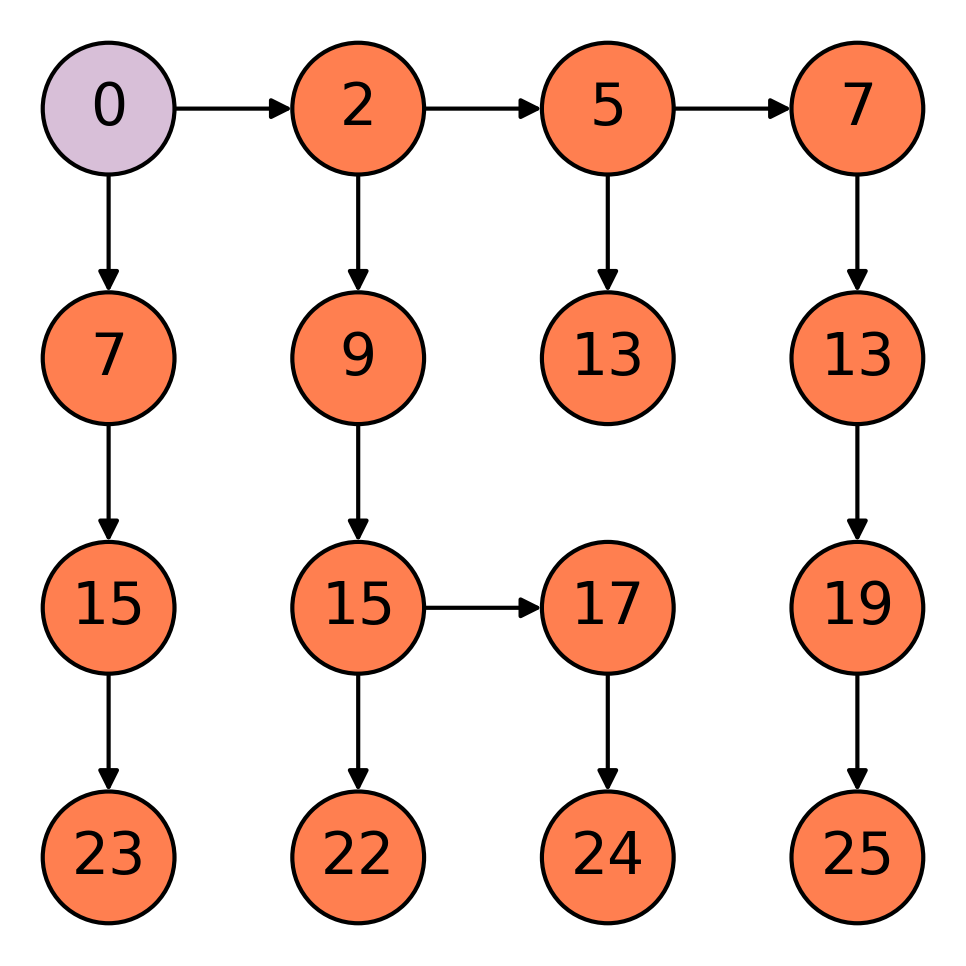}\qquad\qquad
	\includegraphics[width=.25\linewidth]{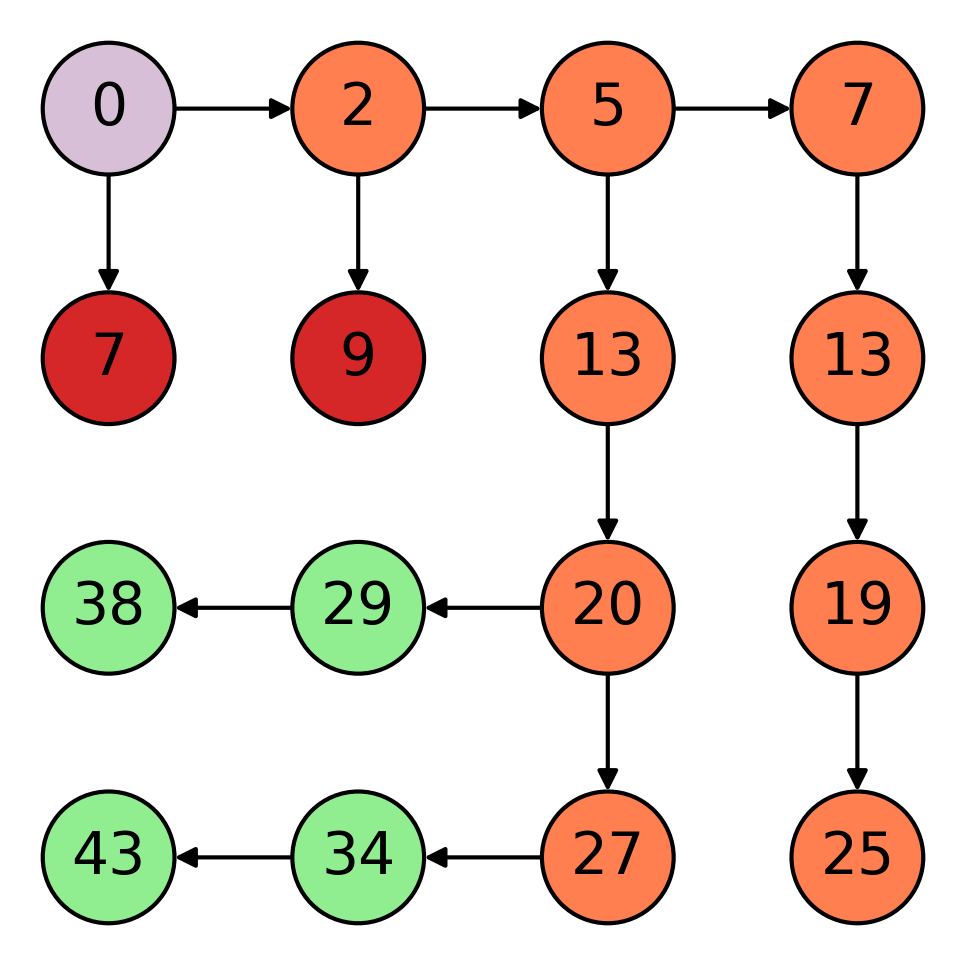}
\end{center}

A clever mixed integer programming (MIP) model of this problem was formulated by \cite{Alvelos2018}.
It is well-known that the optimal dual variables for the shortest path tree problem correspond to the distances of nodes from the root node; \cite{KarupRorbech2004}.\footnote{To be more precise; \textit{if} the dual problem is feasible, then \textit{there exists} a dual-optimal solution such that this is the case. We can retrieve it by setting the dual variable associated with the root node to zero; i.e. constraint (\ref{mip:con4}) in the appendix.}
Thus Alvelos' basic idea was to embed both the shortest path tree problem and its dual into the model.
Explicit slack variables and additional constraints enforce complementary slackness, so that we have access to the correct fire arrival times regardless of the objective function.
That study considered small landscapes with $36$ nodes,
but it is not hard to see that this MIP is intractable for instances of moderate size.
More recently \cite{MendesAlvelos2022} proposed an iterated local search (ILS) metaheuristic that provides good feasible solutions.

In this paper, we propose a novel, exact non-linear integer programming (NLIP) formulation, and a logic-based Benders decomposition (LBBD) solution approach.
We generate synthetic instances using the same parameters as Alvelos and Mendes, and find that LBBD outperforms the direct MIP by orders of magnitude.
We also replicated the ILS metaheuristic in order to compare it to LBBD on larger instances.
Our results indicate that the LBBD approach outperforms both the MIP and the metaheuristic.

In the next section, we will review some of the relevant literature.

\subsection{Literature Review}

\subsubsection{Models of Wildfire Suppression}

Mathematicians have been interested in analytical models of fire spread since at least the 1940s; see \cite{CurryFons1940},  \cite{Fons1946} for instance.
Operations research practitioners in particular have been interested in the optimal fighting of fires since at least the $1960$s; \cite{Jewell1963}. \cite{MendesAlvelos2022} conduct a systematic review of optimisation models related to fire suppression. We mention a representative sample here, and refer the reader to that study for a more thorough treatment. 

\cite{MarianovReVelle1992} propose set covering models that locate resources in order to maximise the population covered by a so-called ``standard response.''
\cite{MeesStrauss1992} propose a probabilistic allocation model which allocates resources to demand points in order to maximise expected utility.
\cite{Wiitala1999} propose a dynamic programming algorithm that minimises the cost of fire containment, where the fire is represented by a fixed perimeter parameterised by the current time.
In that study, among others, the fire is said to be contained when the line built by the resources (the \textit{fireline}) encircles the perimeter of the fire.
\cite{RachaniotisPappis2006} consider a scheduling approach where fires are jobs that need to be processed by available resources; fire and resource behaviour is encoded in a scheduling-theoretic manner.
\cite{HomChaudhuri2010} propose a genetic algorithm for determining the parameters of a parametric equation representing the fireline.
\cite{MerweMinasOzlenHearne2015} propose a vehicle routing formulation that sends vehicles from depots to maximise the total value of protected assets. 

\cite{Hartnell1995} introduced the so-called firefighter problem. An outbreak of fire occurs at some vertices in a graph, and in each time step the fire spreads from each burned vertex to all of its undefended neighbours. In each time step firefighters can defend a fixed number of vertices. The fire is said to be contained when there is no burned vertex with a neighbour that is undefended or unburned. The goal is to compute a defensive sequence which minimises the number of burned vertices. This problem has enjoyed significant theoretical and practical attention. According to \cite{WangMoeller2002}, Hartnell, Finbow, and Schmeisser proved that two firefighters per time step is sufficient to contain an outbreak from a single vertex in the $2$-dimensional square grid. Wang and Moeller showed that a single outbreak in a regular graph of degree $r$ can be contained with $r-1$ firefighters per step, and that $2d - 1$ firefighters per step are sufficient in the $d$-dimensional square grid for $d$ at least three. \cite{DevelinHartke2007} proved---among other interesting things---that the bound is tight. More recently \cite{RamosSouzaRezende2020} proposed a novel metaheuristic that outperformed existing solution techniques for a set of benchmark instances.

\cite{Finney2002} proposed the minimum travel time principle; that is, the time taken for a node to be burned is equal to the length of the shortest path from an ignition node to that node. They convincingly argued that if the travel times of arcs are well-estimated, then then the fire dynamics obtained are almost identical to those of more sophisticated differential equations-based methods.
The accurate estimation of the parameters is, however, beyond the scope of our paper.
\cite{WeiRideoutHall2011} proposed a MIP model based on the minimum travel time principle that aims to minimise the value of cells that burn within a set time.
\cite{BelvalWeiBevers2015, BelvalWeiBevers2016} describe (\textit{respectively}) deterministic and stochastic MIP models that incorporate both detrimental and ecologically beneficial fire into the objective function. The latter study allows for variance in the wind over time. \cite{BelvalWeiBevers2019} extend the model with additional realism, such as crew safety and logistics.

\subsubsection{Benders Decomposition}

Classical Benders decomposition was introduced by \cite{Benders1962} and we refer the reader to \cite{Rahmaniani2017} for a survey of applications. In classical Benders decomposition, a MIP is partitioned into an integer master problem and a linear subproblem. New master variables estimate the contribution of the subproblem to the objective. At incumbent solutions to the master problem, the subproblem is solved to generate Benders cuts, which refine the master estimates. Benders cuts are derived from the linear programming dual of the subproblem. \cite{Geoffrion1972} introduced generalized Benders decomposition, which is an extension of the technique to certain non-linear programming problems that uses convex duality theory instead. Another variant, combinatorial Benders decomposition, was introduced by \cite{CodatoFischetti2006}, and is suited to problems with implication constraints.

\cite{Hooker2000} and \cite{HookerOttosson2003} introduced LBBD, which is an ambitious generalisation of classical Benders decomposition. Like Geoffrion, Codato, and Fischetti, they observed that the concept underlying Benders decomposition could be applied in much more general settings. Since then, LBBD has enjoyed remarkable success on a diverse range of problems. Like its classical counterpart, the emerging LBBD literature is vast and broad. Thus we refer the reader to \cite{Hooker2019} for a survey of applications.

Unlike classical Benders decomposition, with LBBD, the subproblem can take the form of any optimisation problem.
In fact, the subproblem can be any function evaluation, as long as suitable Benders cuts can be derived.
While this flexibility has allowed the principles of Benders decomposition to be applied to a larger set of problems, one weakness is that the structure of the Benders cuts is problem-specific. Indeed, finding suitable Benders cuts sometimes requires considerable effort, since we lack a convenient theory of duality for an arbitrary subproblem.

\subsubsection{Network Interdiction}

Interdiction problems are a class of two-player attacker--defender games. An interdiction refers to an action that inhibits the attacker(or defender)s operation. One player---the \textit{leader}---gets to make a set of interdiction decisions (subject to constraints). The \textit{follower} then solves an optimisation problem parameterised by the leader decisions. The goal of the leader is to minimise the maximum (or maximise the minimum) objective value that can be obtained by the follower. Interdiction problems are thus a special case of the more general class of Stackelberg games; see \cite{Stackelberg1952}. Classical examples include the \textit{shortest path, maximum flow, minimum-cost flow, clique, matching}, and \textit{knapsack} interdiction problems. We refer the reader to \cite{SmithSong2020} for an introduction and an extensive literature review.

Most relevant to us are shortest path problems.
\cite{FulkersonHarding1977} considered the linear shortest path interdiction problem, showing that it is equivalent to the classical minimum-cost flow problem. \cite{IsraeliWood2002} considered the version with binary interdiction variables, which are more realistic in practice, but render the problem NP-Hard. They propose a classical Benders decomposition algorithm, which they augment with so-called ``super-valid inequalities'' that are analogous to logic-based Benders feasibility cuts. In fact, they make a similar logical observation to one we exploit in this paper; that if the interdictor is to force the shortest path to be longer than the incumbent, they must interdict at least one more edge on that path.

Interdiction against classical graph optimisation problems is a well-studied area. But interest in interdiction against more complex subproblems is growing. We believe there is significant scope for the application of LBBD to interdiction against follower problems with integer variables. For example, \cite{Enayaty2019} applied LBBD to a multi-period problem, where interdictions need to be scheduled in order to minimise the cumulative value of maximum flows over a finite time horizon. 
There the MIP master problem makes interdiction decisions, which are scheduled by the constraint programming subproblem.

\subsection{Outline of the Paper}

In Section \ref{sec:LBBDOutline} we outline the LBBD method.
In Section \ref{sec:LBBD} we describe an LBBD formulation of the main problem. In Section \ref{sec:compExperiments} we outline the results of some computational experiments. We conclude in Section \ref{sec:conclusion} with a discussion of the results and some avenues for future research.
In the appendix, we will describe a strengthened form of the MIP from \cite{Alvelos2018} that features in the benchmarking.

\section{Outline of Logic--Based Benders Decomposition}
\label{sec:LBBDOutline}In this section we outline the LBBD method. To be precise, we reiterate and refine the general mathematical framework proposed in \cite{Forbes2021}.
Consider an optimisation problem of the following form, which we call the \textit{original problem} (OP):
\begin{equation}
	\begin{aligned}
		\min_z \quad & g(z) + \sum_{\omega \in \Omega}\Theta_{\omega}(z) \\
		\st \quad & z \in Z,
	\end{aligned}\label{original_problem}
\end{equation}
where $Z, Z'$, and $\Omega$ are sets,
and
\begin{equation*}
g : Z' \to \mathbb{R}\cup\{\pm\infty\} \quad\text{and}\quad \Theta_{\omega} : Z' \to \mathbb R \cup\{\pm\infty\} \quad\text{for }\omega \in \Omega
\end{equation*}
are extended-real valued functions.
Furthermore, $Z \subseteq Z'$, $Z'$ is the domain of the objective function, and $\Omega$ is finite.
We adopt the convention whereby a minimisation problem has $\infty$ objective value if it is infeasible, and $-\infty$ if it is unbounded.
Intuitively speaking, we think of $g$ as the \textit{easy} part of the objective function, while $\Theta_\omega$ are the \textit{hard} parts, such as:
\begin{enumerate}
	\item[(1)] The value function of another minimisation problem (the subproblem) parameterised by the $z$ variables,
	\item[(2)] An indicator function for a logical proposition; in other words, $\Theta_\omega(z) = \infty$ if $\textsf{Prop}(z)$ is true, and $\Theta_\omega(z) = 0$ if $\textsf{Prop}(z)$ is false,
	\item[(3)] The output of a measurement, simulation, or function evaluation.
\end{enumerate}

Our soft assumption is that the $\Theta_\omega$ functions either can't be embedded directly into our underlying optimisation framework, or---if they can---that doing so results in a prohibitively difficult or even intractable model.
Note, moreover, that optimisation problems are not typically expressed in the form of OP a priori.
Indeed, in this framework, the ``decomposition'' aspect of Benders decomposition lies precisely in the expression of the problem in this unique form. 
For simplicity we will refer to $\Theta_{\omega}$ as ``subproblems'' whether or not they correspond to optimisation problems per se.

An important special case occurs if $Z$ is the intersection of a polytope with the integer lattice, and $g$ is a linear function.  In other words, if the problem is
\begin{equation}
	\begin{aligned}
		\min_z \quad & c^\top z + \sum_{\omega \in \Omega}\Theta_{\omega}(z) \\
		\st \quad & Az \leq b, \,z \in \mathbb{Z}^n, \\
		& z_{j\min} \leq z_j \leq z_{j\max} \quad \forall j \in \{1, \ldots, n\},
	\end{aligned}\label{almostIP}
\end{equation}
for some positive integer $n$, matrix $A$, vectors $b$ and $c$, and constants $z_{j\min} \leq z_{j\max}$ for $j \in \{1, \ldots, n\}$.
The problem to be solved now is \textit{almost} an integer program, save for the inclusion of the complicating $\Theta_\omega$ functions.
For example, in classical Benders decomposition, $\Theta_\omega$ is the value function of a linear program (LP) parameterised by the integer variables.
Traditionally the integer variables have been understood to be the complicating factors.
But although LPs are easier to solve than IPs in theory and practice, it is the fact that we carry an LP for every $z \in Z$ that complicates an otherwise tractable integer program.

To apply Benders decomposition to OP we first ask ourselves the following question:
\begin{itemize}
	\item[(Q)] For each $\omega \in \Omega$ and $z' \in Z$, can we find a function $B_{\omega z'} : Z \to \mathbb{R}$  such that
	\begin{equation}
		B_{\omega z'}(z) \leq \Theta_{\omega}(z)\label{eq:benders_qn}
	\end{equation}
	is a valid inequality that obtains equality at $z = z'$, where $B_{\omega z'}$ can be encoded in our underlying optimisation framework?
\end{itemize}
If so, then the problem is a candidate for Benders decomposition. 
Since no reasonable optimisation framework can tolerate singularities in the constraints, two observations need to be made. First, that if $\Theta_\omega(z') = -\infty$ for any $\omega \in \Omega$, then OP is unbounded, and we can stop without looking for $B_{\omega z'}$. Second, that if $\Theta_\omega(z') = \infty$ for any $\omega \in \Omega$, then the subproblem associated with $(\omega, z')$ is infeasible. Therefore $z'$ cannot be an optimal solution to OP, and it is enough to find inequalities that separate $y'$ from the set of feasible solutions.

To apply Benders decomposition in practice, we first replace the $\Theta_\omega$ functions with a new collection of continuous variables, $\theta_\omega$, intended to estimate their contribution to the objective function. The resulting relaxation is called the \textit{master problem} (MP) and it has the following form:
\begin{equation}
	\begin{aligned}
		\min_y \quad & g(z) + \sum_{\omega \in \Omega}\theta_{\omega} \\ 
		\st \quad & \theta \in \mathbb{R}^{\lvert\Omega\rvert},\, z \in Z.
	\end{aligned}
\end{equation}

If the range of $\Theta_\omega$ is $\{0, \infty\}$, then we can omit the corresponding variable. By assumption, MP is significantly easier to solve than OP and yields a lower bound on its optimal objective value. Currently this is obvious since MP is unbounded---nothing links $\theta$ and $z$---but we make sure the property is preserved by Benders cuts, which will be defined soon. For now we can clarify that by the ``underlying optimisation framework'' we mean the solver or algorithm used to solve MP. If OP is an ``almost integer program'' in the above sense, then MP is a MIP which can be solved using the branch-and-bound algorithm.

Let $\theta^*, z^*$ be an optimal solution to MP. We solve the subproblem associated with $z^*$ by calculating $\Theta_{\omega}(z^*)$ for each $\omega \in \Omega$, whether that is done by solving optimisation problems, verifying logical propositions, running simulations, something else, or several things in combination. Then we consider the following case distinctions:
\begin{enumerate}
	\item If $\theta_\omega^* = \Theta_{\omega}(z^*) < \infty$ for all $\omega \in \Omega$, then $z^*$ is an optimal solution to OP.
	\item For each $\omega \in \Omega$ such that $\theta_{\omega}^* < \Theta_{\omega}(z^*) < \infty$, add a Benders optimality cut of the form 
	\begin{equation*}
		B_{\omega z^*}(z) \leq \theta_\omega
	\end{equation*}to MP and continue.
	\item For each $\omega \in \Omega$ such that $\Theta_\omega(z^*) = \infty$, find a function $F_{\omega z^*}$ such that $F_{\omega z^*}(z)\leq 0$ if $z$ is feasible for OP but $F_{\omega z^*}(z^*) > 0$.
	Then add a Benders feasibility cut of the form 
	\begin{equation*}
		F_{\omega z^*}(z) \leq 0
	\end{equation*}
	to MP and continue.
\end{enumerate}
A trivial optimality cut is given by
\begin{equation*}
\theta_\omega \geq B_{\omega z^*}(z) = \begin{cases}
\Theta_\omega(z^*) & \text{if }z = z^*, \\
-\infty & \text{otherwise}.
\end{cases}
\end{equation*}
Cuts like these---and those equivalent to them---are called no-good cuts; they update an estimate at the current solution, but only that solution.
A no-good feasibility cut separates $Z\setminus\{z^*\}$ from $z^*$. In other words, it cuts off the current ``no good'' solution, but only that solution.
In $0$-$1$-programming there is always a linear no-good cut expressing that the value of at least one variable must change.

Benders decomposition iterates between solving the master problem and evaluating solutions.
When the $\theta_\omega$ variables correctly estimate the subproblem values at an optimal master solution, we can stop. Otherwise we add optimality and feasibility cuts as necessary and solve the new master problem. We continue in this way until the subproblem solutions agree with the master estimates.

In the 1980s, researchers such as \cite{PadbergRinaldi1987} started to integrate cutting planes into the branch-and-bound algorithm. Padberg and Rinaldi coined the expression ``branch-and-cut'' for the imposition of new inequalities at nodes of the branch-and-bound tree. Since then, cutting planes have been a feature of all decent MIP solvers.
Branch-and-check is the name often given to a a variant of LBBD that evaluates each incumbent solution to a single master problem; in other words, when Benders cuts are embedded into a branch-and-cut framework; \cite{Erlendur2001}. While branch-and-check does not universally outperform standard LBBD (see \cite{Beck2010}), it often enjoys a number of advantages. Since rebuilding a branch-and-bound tree from scratch in each iteration is often cumbersome and wasteful, in practice, we prefer to add Benders cuts as lazy constraints during a callback routine. Lazy constraints have been available in modern solvers such as Gurobi since 2012; \cite{gurobi}. Standard LBBD may be preferable when the master problem is much easier to solve than the subproblems.

By definition, a Benders cut must be tight at the current solution, and valid at all others. In the worst-case scenario, we can use no-good cuts. ``Logic-based'' Benders decomposition gets its name from the important role of logical inference in deriving stronger cuts. The strength of an optimality cut is determined by how tight the inequality (\ref{eq:benders_qn}) is at other solutions, while the strength of a feasibility cut corresponds to how many other infeasible solutions it eliminates; that is $\lvert \{z \in Z : F_{\omega z^*}(z) < 0\}\rvert$. The task of the practitioner is to infer from the current subproblem solution, valid bounds on as many other solutions as possible.

When MP is a MIP, it is easy to see why stronger cuts translate into a faster branch-and-check algorithm. Consider the branch-and-bound tree of the master problem. A priori, the best known lower bound for each solution is trivial. Therefore, with no-good cuts, we have to build the entire tree in order to prove an optimal solution. But if our cuts impose non trivial bounds at other solutions as well, then we already have useful lower bounds at other nodes. Therefore, we may be able to fathom some nodes without explicitly enumerating all of their child nodes. The more solutions we can infer valid bounds for at a time, and the tighter those bounds are, the faster we can prune the branch-and-bound tree and prove optimality for MP. Similar reasoning also applies to a standard LBBD algorithm.

Having summarised the LBBD method, in the next section we will apply it to the main problem.

\section{Formulation of the Main Problem}
\label{sec:LBBD}In this section we will describe an exact non-linear integer programming formulation of the problem and an LBBD-based solution approach.
The formulation is motivated in part by the following observation;
the only variables in the MIP model (see \ref{sec:MIP}) that represent genuine \textit{decisions} are those that allocate suppression resources.
The rest of the model only evaluates the feasibility and objective value of those decisions.
But if we fix the genuine decisions, their evaluation is almost trivial.

For simplicity we assume there is a single ignition node, say $I_0$. This assumption can be made without loss of generality using a standard super-source transformation. Now, towards the formulation of the problem, for each $n \in N,\, n \neq I_0$ and each $t \in T$, we introduce a binary variable, $z_{nt}$, with $z_{nt} = 1$ if a resource is located at node $n$ at time $t$, and $z_{nt} = 0$ otherwise.
Then define the following constraints:
\begin{align}
\sum_{n \in N}z_{nt} &\leq a_t & & \forall n \in N,\, n \neq I_0, \label{zcon1}\\
\sum_{t \in T}z_{nt} &\leq 1 & & \forall t \in T, \label{zcon2}\\
z_{nt} &\in \{0, 1\} & & \forall n \in N,\,n \neq I_0,\, t \in T. \label{zcon3}
\end{align}
Constraint (\ref{zcon1}) limits the number of available resources, and constraint (\ref{zcon2}) says we can only locate at most one resource at each node over the time horizon.
Let $Z = \{z : (\ref{zcon1}-\ref{zcon3})\}$.
For each $z \in Z$ we define modified weights $c^z$ by
\begin{equation*}
c_{ij}^z = \begin{cases}
c_{ij} + \Delta & \textup{if }\exists t \in T\textup{ with }z_{it} = 1, \\
c_{ij} & \text{otherwise,}
\end{cases}\quad\text{for }ij \in A.
\end{equation*}

\medskip
\noindent In other words, the length of an arc changes if there is a resource on its tail at any time. If $z$ is feasible with respect to Assumption \ref{assume:notonfire}, then $c^z$ describes the correct fire travel times given the interdiction policy $z$.

For each $z \in Z$, let $D(z)$ be a shortest path tree in $(N, A,\, c^z)$; in other words, for each $n \in N$, the unique path from $I_0$ to $n$ in $D(z)$ is the shortest path from $I_0$ to $n$ in $(N, A,\, c^z)$.
The tree $D(z)$ characterises the spread of the fire given $z$.
We let $d(z, n)$ denote the distance from $I$ to $n$ in $D(z)$, that is, with the modified travel times.
For each $n \in N$ and $z \in Z$, we refer to the path from $I_0$ to $n$ in $D(z)$ as the ``binding fire path.'' Consider again the example from the introduction:
\begin{center}
\includegraphics[width=.25\linewidth]{fireSmall1}\qquad\qquad
\includegraphics[width=.25\linewidth]{fireSmall2}
\end{center}
The arcs shown describe the binding fire paths, and the labels are $d(z, n)$ for $n \in N$.

For each $n \in N$, define a function $\Theta_n : Z \to \{0, 1\}$ by 
\begin{equation*}
	\Theta_n(z) = \begin{cases}
		1 & \text{if }d(z, n) < \psi, \\
		0 & \text{otherwise}.
	\end{cases}
\end{equation*}
So $\Theta_n(z) = 0$ if node $n$ is protected by $z$, and $\Theta_n(z) = 1$ otherwise.
Next, for each $n \in N$, define $\phi_n : Z \to \{0, \infty\}$ by
\begin{equation*}
	\varphi_n(z) = \begin{cases}
		\infty & \text{if }\exists t \in T \text{ with }z_{nt}=1\text{ and }d(z, n) < t, \\
		0 & \text{otherwise}.
	\end{cases}
\end{equation*}
In other words, $\phi_n(z) = \infty$ if we allocate a resource to a node that is already on fire, and $\phi_n(z) = 0$ otherwise.
To match Mendes and Alvelos, we are allowed to locate a resource at a node the \textit{exact} instant the fire arrives, otherwise we could define $\phi_n$ with a non-strict inequality.
An exact non-linear integer programming formulation is now
\begin{equation}
\min_z \quad  \sum_{n \in N}\Theta_n(z) + \sum_{n \in N}\phi_n(z) \quad \st \quad z \in Z. \tag{\textrm{OP}}
\end{equation}

\medskip
We omit the $\phi_n$ functions, and for each $n \in N$ we introduce a new continuous variable, $\theta_n$, to linearly approximate the contribution of $\Theta_n(z)$ to the objective function.
The initial form of the master problem is then
\begin{equation}
\begin{aligned}
\min \quad & \sum_{n \in N}\theta_n \\
\st \quad & 0 \leq \theta_n \leq 1 \quad \forall n \in N, \\
& z \in Z.
\end{aligned}\tag{\textrm{MP}}
\end{equation}

\bigskip
Let $\theta^*, z^*$ be an optimal solution to MP. 
We solve the subproblem associated with $z^*$ by calculating $\Theta_n(z^*)$ and $\phi_n(z^*)$ for each $n \in N$. 
This can be done with a single source shortest path algorithm, such as \cite{Dijkstra1959}'s.
If $\theta_n^* = \Theta_n(z^*)$ and $\phi_n(z^*) = 0$ for each $n \in N$, then we are done, and $z^*$ is also an optimal solution to OP.
Otherwise we update MP with Benders cuts and continue.

\subsection{Benders Cuts}

For each $n \in N$, $n \neq I_0$, and $z \in Z$, let $P_n(z) \subset N\setminus(\{n, I_0\})$ denote the nodes on the interior of the binding fire path to $n$.
We can store the paths for the current solution while solving the subproblem.
Let $\theta', z'$ be an incumbent solution to MP such that $\theta'$ is optimal given $z'$.
Suppose $\Theta_n(z') = 1$ for some $n \in N$.
Then $n$ is not protected by $z'$.
But we can infer from this that $n$ will also not be protected by any solution that does not successfully interdict at least one more node in $P_n(z')$.
This translates into an optimality cut.

\begin{lemma}
For each $z' \in Z$ and $n \in N$ with $\Theta_n(z') = 1$,
\begin{equation*}
\Theta_n(z) \geq 1 - \sum_{n' \in P_n(z')}\sum_{
\substack{
t \in T \\ :d(z', I, n') \geq t \\ \wedge z_{n't}' = 0
}}z_{n't} \quad \left(\;=: B_{nz'}(z)\;\right)
\end{equation*}
is a valid inequality on $Z$ and obtains equality at $z'$.
\end{lemma}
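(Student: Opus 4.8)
The plan is to verify the two claims of the lemma separately: first that equality holds at $z = z'$, and then that the inequality is valid for every $z \in Z$. The right-hand side $B_{nz'}(z)$ is $1$ minus a sum of $z$-variables; the variables appearing in that sum are exactly those $z_{n't}$ with $n'$ on the interior of the binding fire path to $n$ under $z'$, with $t$ a release time no later than the arrival time $d(z',I,n')$, and with $n'$ not already interdicted by $z'$ at time $t$.

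For tightness at $z'$: every term $z_{n't}'$ in the sum has $z_{n't}' = 0$ by the very condition $z_{n't}' = 0$ placed on the index set, so the sum evaluates to $0$ and $B_{nz'}(z') = 1$. Since by hypothesis $\Theta_n(z') = 1$, we get $\Theta_n(z') = 1 = B_{nz'}(z')$, which is the claimed equality.

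For validity: fix any $z \in Z$. Since $\Theta_n$ takes values in $\{0,1\}$, the inequality can only fail if $\Theta_n(z) = 0$ while $B_{nz'}(z) \geq 1$, i.e.\ while the sum $\sum z_{n't}$ over the index set is $0$. So assume the sum is zero; I must show $\Theta_n(z) = 1$, i.e.\ $d(z,n) \geq \psi$. The sum being zero means that for no $n' \in P_n(z')$ and no admissible $t$ do we have $z_{n't} = 1$ unless $z_{n't}' = 1$ already; informally, $z$ does not interdict any interior node of the old binding path at a time that would actually delay the fire, beyond what $z'$ already did. Consider the old binding fire path $\pi$ to $n$ under $z'$. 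I want to argue that $\pi$ is still a valid path in $(N,A,c^z)$ whose length is at most $d(z',n)$, hence $d(z,n) \le d(z',n)$ --- wait, that is the wrong direction. The correct argument is: the length of $\pi$ under $c^z$ is \emph{at least} its length under $c^{z'}$, because on each interior arc the only way $c^z$ could be \emph{smaller} than $c^{z'}$ is if $z'$ interdicted that arc's tail and $z$ does not; but for interior nodes $n'$ carrying an \emph{effective} interdiction under $z'$ (one with release time $\le d(z',I,n')$), the index set of the cut would include the pair $(n',t)$ with $z_{n't}'=1$ --- hmm, but such pairs are excluded by the $z_{n't}'=0$ condition. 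So the delay $\Delta$ that $z'$ contributed at $n'$ need not be reproduced by $z$, and $\pi$ could be \emph{shorter} under $c^z$.

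The hard part, then, is exactly this: handling the interior nodes that $z'$ interdicts effectively. The right statement to prove is that under the assumption ``$\sum z_{n't} = 0$ over the index set,'' \emph{every} path from $I_0$ to $n$ in $(N,A,c^z)$ has length $\ge \psi$. I would argue by contradiction: suppose some path $\rho$ from $I_0$ to $n$ has $c^z$-length $< \psi$. Then under $z'$ this same $\rho$ had $c^{z'}$-length $\ge d(z',n) \ge \psi > $ (its $c^z$-length), so $z'$ must have delayed $\rho$ somewhere that $z$ does not --- but crucially one must track \emph{where} on $\rho$ this happens and relate it to $P_n(z')$, which is the interior of the \emph{binding} path, not of $\rho$. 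This mismatch is the genuine obstacle, and I expect the author's proof resolves it by a more careful inductive or ``first point of divergence'' argument along the binding path $\pi$ itself: walk along $\pi$ from $I_0$; as long as the prefix is uninterdicted-differently, the arrival times under $z$ at nodes of $\pi$ stay $\ge$ those under $z'$; at the first interior node $n'$ of $\pi$ where the arrival time under $z$ would drop below $d(z',I,n')$, one shows $z'$ must have placed an effective resource there (so $z_{n't}'=1$ with $t \le d(z',I,n')$) and that to undo the protection $z$ would need $z_{n't}=1$ for exactly such a $t$, contradicting that the cut-sum is zero --- or, if $z'$ did \emph{not} interdict there, then the arrival time under $z$ at $n'$ is still $\ge d(z',I,n')$ and the induction continues to $n$ itself, giving $d(z,n) \ge d(z',n) \ge \psi$. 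I would present the validity argument as this single induction along the binding path, with the two value-cases ($\Theta_n(z)=0$ or $1$) folded in at the end, and flag the bookkeeping over the index set $\{t : d(z',I,n') \ge t \wedge z_{n't}'=0\}$ as the step requiring the most care.
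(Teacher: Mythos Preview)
Your tightness argument at $z=z'$ is correct. The validity argument, however, is built on a sign error that propagates through everything afterward: you write ``I must show $\Theta_n(z)=1$, i.e.\ $d(z,n)\geq\psi$,'' but by definition $\Theta_n(z)=1$ means node $n$ is \emph{not} protected, i.e.\ $d(z,n)<\psi$. With the inequality the right way round, your very first instinct is exactly the argument you need: the old binding path $\pi$ is still a path in $(N,A,c^z)$, so $d(z,n)\leq \len(\pi,c^z)$, and you want $\len(\pi,c^z)\leq d(z',n)<\psi$. The case you flag as troublesome---``$\pi$ could be shorter under $c^z$''---is harmless, because shorter is precisely the direction that helps. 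Everything from ``wait, that is the wrong direction'' onward (the attempt to show $\len(\pi,c^z)\geq\len(\pi,c^{z'})$, the claim that ``every path from $I_0$ to $n$ has length $\geq\psi$,'' and the proposed induction) is chasing the reversed goal and should be discarded.

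The paper's own proof is two lines: if no new interdiction on $P_n(z')$ appears in the sum, then $B_{nz'}(z)=1$ and (implicitly, because the binding path is not further delayed) $\Theta_n(z)=1$; otherwise $B_{nz'}(z)\leq 0$ and the bound is trivial. Once you fix the sign, your argument collapses to this. You do brush against a genuine subtlety the paper glosses over: a $z\in Z$ could interdict some $n'\in P_n(z')$ at a time $t>d(z',I,n')$, which is excluded from the sum yet still lengthens $\pi$ under $c^z$. Such a $z$, however, necessarily places a resource on a node already on fire (so $\phi_{n'}(z)=\infty$), and is eliminated by the feasibility cuts rather than by this optimality cut; the paper does not spell this out.
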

\begin{proof}
Let $z \in Z$. Then $B_{nz'}(z)$ is $0$ unless we successfully interdict at least one new node in $P_n(z')$. Otherwise it is non-positive and the inequality is satisfied.
\end{proof}

We can possibly strengthen the Benders cut by noting that it may take more than one new interdiction to intercept the fire in time.
For each $n \in N$ and $z \in Z$ with $\Theta_n(z) = 1$, let
\begin{equation}
\mathcal R_n(z) =
\lceil (\psi - d(z, n))/\Delta \rceil
\end{equation}
be the minimum number of successful interdictions needed to protect $n$.
Since we define $\mathcal R_n(z)$ only for $z$ with $\Theta_n(z) = 1$, we have $\mathcal R_n(z) \geq 1$.
Note that when we interdict the first of $\mathcal R_n(z)$ new nodes, the time window for the other $\mathcal R_n(z) - 1$ interdictions increases by $\Delta$ on the appropriate nodes.
Likewise, the time window for the $\mathcal R_n(z)\ts{th}$ interdiction is increased by $(\mathcal R_n(z^*) - 1)\Delta$ on the appropriate nodes.
But we do not know a priori which nodes the model will interdict at what times.
Thus the new Benders cut has more $z_{nt}$ variables, but smaller coefficients.
\begin{lemma}
For each $z' \in Z$ and $n \in N$ with $\Theta_n(z') = 1$,
\begin{equation}
	\Theta_n(z) \geq 1 - \sum_{n' \in P_n(z')}\sum_{
		\substack{
			t \in T \\ :d(z', I, n') + (\mathcal R_n(z') - 1)\Delta \geq t \\ \wedge z_{n't}' = 0
	}}z_{n't} / \mathcal R_n(z')
\end{equation}
is a valid inequality on $Z$ and obtains equality at $z'$.
\end{lemma}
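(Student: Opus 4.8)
The plan is to verify the two assertions in turn. Tightness at $z'$ is immediate: with $z=z'$ the inner summation runs only over $t$ for which $z'_{n't}=0$, so every term $z'_{n't}$ in it vanishes, the right-hand side collapses to $1$, and $\Theta_n(z')=1$ forces equality. For validity I would fix $z\in Z$, abbreviate $R:=\mathcal R_n(z')$, and observe that since the $z_{n't}$ are binary the subtracted sum is nonnegative, so the right-hand side is at most $1$ and the inequality is trivial when $\Theta_n(z)=1$. This reduces the problem to the case $\Theta_n(z)=0$, i.e.\ $d(z,n)\geq\psi$, where the goal becomes: show the double sum is at least $R$.

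To do this I would argue along the $z'$-binding path itself. Write it as $I_0=v_0,v_1,\dots,v_k=n$, so $P_n(z')=\{v_1,\dots,v_{k-1}\}$, and set $\tau_i:=d(z',v_i)$; since a prefix of a shortest path is again a shortest path, $\tau_i$ is the $c^{z'}$-length of $v_0\cdots v_i$. Let $J^+$ (resp.\ $J^-$) collect the indices $i\in\{1,\dots,k-1\}$ at which $v_i$ carries a resource under $z$ but not $z'$ (resp.\ under $z'$ but not $z$); by constraint~(\ref{zcon2}) each node carries at most one resource, so for $i\in J^+$ the placement time $t_i$ is well defined and $z'_{v_i t_i}=0$. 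Since $I_0$ is never interdicted, $c^z$ and $c^{z'}$ agree on an arc $v_{l-1}v_l$ unless $l-1\in J^+\cup J^-$, in which case they differ by exactly $\pm\Delta$; summing along a prefix, the $c^z$-length of $v_0\cdots v_i$ equals $\tau_i+\Delta\bigl(|J^+\cap\{1,\dots,i-1\}|-|J^-\cap\{1,\dots,i-1\}|\bigr)$. As $d(z,v_i)$ cannot exceed the $c^z$-length of any $I_0$--$v_i$ path, this yields the key estimate
\begin{equation*}
d(z,v_i)\;\leq\;\tau_i+\bigl|J^+\cap\{1,\dots,i-1\}\bigr|\,\Delta\qquad\text{for every }i.
\end{equation*}

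The rest is bookkeeping. Taking $i=k$ and using $J^\pm\subseteq\{1,\dots,k-1\}$ gives $\psi\leq d(z,n)\leq d(z',n)+(|J^+|-|J^-|)\Delta$, so $|J^+|-|J^-|\geq(\psi-d(z',n))/\Delta$, and integrality of the left side forces it up to $\lceil(\psi-d(z',n))/\Delta\rceil=R$; hence $|J^+|\geq R$. Now list $J^+$ in path order, $j_1<j_2<\cdots$. For each $m\leq R$ the key estimate at $i=j_m$ gives $d(z,v_{j_m})\leq\tau_{j_m}+(m-1)\Delta\leq d(z',v_{j_m})+(R-1)\Delta$, and since $z$ respects Assumption~\ref{assume:notonfire} (any master solution that does not being removed by a feasibility cut) the resource at $v_{j_m}$ does not sit on a burning node, so its placement time satisfies $t_{j_m}\leq d(z,v_{j_m})\leq d(z',v_{j_m})+(R-1)\Delta$. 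Thus each of the $R$ distinct pairs $(v_{j_m},t_{j_m})$ belongs to the index set of the double sum and contributes $z_{v_{j_m}t_{j_m}}=1$, so the double sum is at least $R$, the right-hand side of the cut is at most $1-R/R=0=\Theta_n(z)$, and we are done.

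The hard part is the last step, not the first two. Knowing merely that $z$ makes at least $R$ fresh interdictions on the $z'$-binding path does not suffice; one must certify that at least $R$ of them fall inside the enlarged windows $d(z',n')+(R-1)\Delta$ indexing the sum, whereas a priori all the new interdictions could be late. The key estimate is exactly what forbids this: the $m$-th new interdiction along the path is reached by the fire at most $(m-1)\Delta$ later than it was under $z'$, so for $m\leq R$ it lies within the window, after which Assumption~\ref{assume:notonfire} pins its placement time below that bound. (The $-|J^-|$ speed-up terms never interfere, since dropping them only loosens the upper bounds in the favourable direction.) Everything else---the tightness check, the trivial $\Theta_n(z)=1$ case, and deducing $|J^+|\geq R$ from $d(z,n)\geq\psi$---is routine.
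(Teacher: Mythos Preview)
Your argument is correct and follows the same overall line as the paper's---case-split on whether enough new interdictions appear on the $z'$-binding path---but you supply a genuine proof where the paper offers only a two-sentence sketch. The paper simply asserts that ``if we interdict at least $\mathcal R_n(z')$ new nodes on $P_n(z')$, then the right hand side is non-positive,'' tacitly identifying ``new interdictions on the path'' with ``terms counted by the double sum.'' You correctly flag that this is not automatic: the sum is restricted to placements within the enlarged window $t\leq d(z',n')+(\mathcal R_n(z')-1)\Delta$, so one must argue that at least $\mathcal R_n(z')$ of the new interdictions actually land inside their windows. Your prefix-path estimate $d(z,v_{j_m})\leq \tau_{j_m}+(m-1)\Delta$, combined with Assumption~\ref{assume:notonfire} to bound the placement time $t_{j_m}\leq d(z,v_{j_m})$, is precisely what is needed and is absent from the paper's proof.

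Your invocation of Assumption~\ref{assume:notonfire} is in fact a quiet correction of the lemma statement. As written, the lemma claims validity on all of $Z$, but the cut can fail for $z\in Z$ that violate the assumption: take a three-node path $I_0\to v_1\to n$ with unit arc lengths, $\psi=3$, $\Delta=1$, $z'=0$, and $z$ placing a resource at $v_1$ at a time $t=2>d(z,v_1)=1$; then $\Theta_n(z)=0$ but the right-hand side equals $1$. Such $z$ are OP-infeasible, so the LBBD algorithm remains sound (the feasibility cut handles them), but your restriction to $z$ respecting Assumption~\ref{assume:notonfire} is the accurate hypothesis, and the paper's short proof does not surface this.
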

\begin{proof}
Let $z \in Z$. If (compared to $z^*$) we added fewer than $\mathcal R_n(z')$ new interdictions to $P_n(z')$, then the right hand side is strictly positive. But in that case, $\Theta_n(z) = 1$, so the inequality is satisfied. If we interdict at least $\mathcal R_n(z')$ new nodes on $P_n(z')$, then the right hand side is non-positive and the inequality is satisfied.
\end{proof}

Now suppose $\phi_n(z')$ for some $n \in N$. Then $z'$ is infeasible with respect to Assumption \ref{assume:notonfire}. But we know that in order to get a feasible solution, we must do at least one of two things. We either remove the offending resource, or interdict enough new nodes on the binding fire path early enough to catch the fire. Let $t' \in T$ be the time period with $z_{nt'}' = 1$ and define
\begin{equation}
	\mathcal Q_n(z') = \lceil (t' - d(z', I, n))/\Delta \rceil
\end{equation}
The previous observation then translates into a feasibility cut.
\begin{lemma}
Let $z' \in Z$ and $n \in N$ and suppose $\phi_n(z') = \infty$. Let $t' \in T$ be the time period with $z_{nt'}' = 1$. Then
\begin{equation} \label{cut:feas}
1 - z_{nt'} + \sum_{n' \in P_n(z')}\sum_{
	\substack{
		t \in T \\ :d(z', I, n) + (\mathcal Q_n(z') - 1)\Delta \geq t \\ \wedge z_{n't}' = 0
}}z_{n't}/\mathcal Q_n(z') \geq 1
\end{equation}
is a valid feasibility cut.
\end{lemma}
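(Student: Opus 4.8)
The plan is to mimic the structure of the proof of the preceding optimality-cut lemmas, but to account for the extra disjunction introduced by Assumption \ref{assume:notonfire}: feasibility can be restored either by removing the offending resource at node $n$, or by interdicting enough new nodes on the binding fire path to $n$ to delay the fire past time $t'$. First I would fix an arbitrary $z \in Z$ and split into the two cases mirroring the two ways the cut can be satisfied. In the first case, $z_{nt'} = 0$; then the term $1 - z_{nt'}$ already equals $1$, the double sum is non-negative, so the left-hand side is at least $1$ and the inequality holds trivially (regardless of whether $z$ is feasible for OP).

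In the second case, $z_{nt'} = 1$, so we must show that the double sum is at least $\mathcal Q_n(z')$ whenever $z$ is feasible for OP. Here I would argue contrapositively, exactly as in Lemma 3.3: suppose $z$ adds strictly fewer than $\mathcal Q_n(z')$ new successful interdictions among the nodes $n' \in P_n(z')$ within their respective (widened) time windows $d(z', I, n) + (\mathcal Q_n(z') - 1)\Delta \geq t$. Then I want to conclude that the fire still reaches $n$ by time $t'$ along (a subpath of) the old binding fire path, i.e. $d(z, n) \le t'$; since $z_{nt'} = 1$ this means $\phi_n(z) = \infty$, so $z$ is infeasible for OP, contradicting the assumption. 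The quantitative heart is that each successful new interdiction on the binding fire path adds at most $\Delta$ to the arrival time at $n$ along that path, and $\mathcal Q_n(z') = \lceil (t' - d(z', I, n))/\Delta \rceil$ is precisely the number of $\Delta$-increments needed to push $d(z', I, n)$ up to $t'$; so fewer than $\mathcal Q_n(z')$ of them leave the arrival time along the old path strictly below $t'$ — and hence the actual distance $d(z,n)$, being no larger, is also below $t'$.

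I expect the main obstacle to be the same subtlety that is glossed over in the proofs of Lemmas 3.2 and 3.3, namely the precise bookkeeping of ``successful'' interdictions and widening time windows: an interdiction at $n'$ only delays the fire if it is placed before the fire arrives at $n'$, and once earlier interdictions on the path push later arrival times back, the window $d(z', I, n') \ge t$ at which a resource at $n'$ counts as successful itself shifts — which is why the cut uses the enlarged bound $d(z', I, n) + (\mathcal Q_n(z') - 1)\Delta \ge t$ rather than the nominal one. I would handle this by noting that we need only a conservative lower bound on the effect of new interdictions: taking the arrival times from $z'$ and adding $\Delta$ for each new resource encountered along the old binding fire path gives an over-estimate of $d(z,n)$ only if each such resource is genuinely on the path and placed in time, and the widened windows in the sum are exactly what guarantees every new resource that could plausibly matter is counted. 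One also has to dispose of the edge case $d(z', I, n) = t'$ (which forces $\mathcal Q_n(z') = 0$, making the cut reduce to $1 - z_{nt'} + (\text{non-negative}) \ge 1$, i.e. $z_{nt'} = 0$) — but Assumption \ref{assume:notonfire} only bans interdiction at strictly-already-burning nodes, so $\phi_n(z') = \infty$ actually requires $d(z', I, n) < t'$ and hence $\mathcal Q_n(z') \ge 1$, keeping the division well-defined.
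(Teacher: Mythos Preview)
Your proposal is correct and follows essentially the same two-case argument as the paper's (very terse) proof: either the offending resource is removed ($z_{nt'}=0$), or at least $\mathcal Q_n(z')$ new interdictions on $P_n(z')$ are required, else the left-hand side falls strictly below~$1$. Your additional care about the strict inequality $d(z',n)<t'$, the well-definedness of $\mathcal Q_n(z')\geq 1$, and the time-window bookkeeping goes beyond what the paper actually spells out, but does not depart from its approach.
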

\begin{proof}
If we do not remove the resource from node $n$ at time $t'$, or add at least $\mathcal Q_n(z')$ new interdictions to $P_n(z')$, then left hand side is strictly less than $1$, which is a contradiction. If we do at least one of those things, then the left hand side is at least $1$ and the inequality is satisfied.
\end{proof}

Taking $z_{nt'}$ to the right-hand side and cancelling $1$ from each side gives another interpretation; that $z_{nt'}$ may not be $1$ unless we interdict at least one new node on the binding fire path.
Now
having derived both feasibility and optimality cuts, we can summarise the formulation.
After adding the optimality cuts associated with $z = 0$ as first-order constraints, we solve MP with Algorithm \ref{callback} as a callback routine.

A natural greedy approach to solving problems with multiple time periods is to use a rolling horizon approach. Here we describe a simple version with no look-ahead.
For each time period, we solve the restricted problem where we assume there are no resources available in future time periods.
We then fix the interdiction decisions for that time period, unfix the other variables, and move on.
For each $t \in T,\,t \neq \max T$, let $\textsf{next}(t) = \min\{t' \in T: t' > t\}$ denote the next time period that resources become available.
The greedy approach is summarised in Algorithm \ref{greedy}. And while the heuristic is not guaranteed to find near-optimal solutions, it  quickly provides us with a sensible starting solution to the master problem.

\begin{algorithm}[H]\caption{Callback Routine for an Incumbent Solution $\theta', z'$ to MP}\label{callback}
\begin{algorithmic}[1]
\medskip
\Require $z'$ integral, $\theta'$ optimal given $z'$
\State Calculate $\Theta_n(z')$ and $\phi_n(z')$ for all $n \in N$
\For{$n \in N$}
	\If{$\varphi_n(z') = \infty$}
		\State $\mathcal Q \longleftarrow \lceil (t' - d(z', I, n))/\Delta \rceil$
		\State Add the following lazy constraint to MP
		\begin{equation*}
		1 - z_{nt'} + \sum_{n' \in P_n(z')}\sum_{
			\substack{
				t \in T \\ :d(z', I, n) + (\mathcal Q - 1)\Delta \geq t \\ \wedge z_{n't}' = 0
		}}z_{n't}/\mathcal Q \geq 1
		\end{equation*}
	\EndIf
	\If{$\theta_n' < 1$ and $\Theta_n(z') = 1$}
		\State $\mathcal R \longleftarrow \lceil (\psi - d(z', I, n))/\Delta \rceil$
		\State Add the following lazy constraint to MP
		\begin{equation*}
			\theta_n \geq 1 - \sum_{n' \in P_n(z')}\sum_{
				\substack{
					t \in T \\ :d(z', I, n') + (\mathcal R - 1)\Delta \geq t \\ \wedge z_{n't}' = 0
			}}z_{n't} / \mathcal R
		\end{equation*}
	\EndIf
\EndFor
\State Continue solving MP
\end{algorithmic}
\end{algorithm}
\begin{algorithm}[H]\caption{Greedy Heuristic}\label{greedy}
\begin{algorithmic}[1]
	\State Put $t = \min T$
	\While{$t \leq \max T$}
	\State Fix $a_{t'} = 0$ for $t' \in T$, $t' > t$
	\State Solve MP with Algorithm \ref{callback} and retrieve optimal $z_{nt}^*$ for $n \in N$
	\State Fix $z_{nt} = z_{nt}^*$ for $n \in N$
	\If{$t < \max T$}
	\State Reset $a$ and put $t \longleftarrow \textsf{next}(t)$
	\EndIf
	\EndWhile
	\State \textbf{Return} $z$
\end{algorithmic}
\end{algorithm}

\section{Computational Experiments}
\label{sec:compExperiments}In this section we describe the results of some computational experiments.
All implementations in this paper were programmed in Python 3.8.12 via the Anaconda distribution (4.11.0).
MIPs were solved using Gurobi 9.1.2.
Dijkstra's algorithm was coded from scratch using the \texttt{heapq} package.
Jobs were run in parallel on a computing cluster operating $2.5$GHz CPUs.
Each job was allocated at most $5$GB of memory, $7200$ seconds of runtime, and a single thread.
Note that we were able to achieve faster solution times using a typical desktop PC. Python code and all instances are available in the accompanying Github repository.

First we compare the exact LBBD approach to the direct MIP model (see \ref{sec:MIP}).
Following Mendes and Alvelos, we consider square grids of sizes $10\times 10$, $20\times 20$, and $30\times 30$.
Each node has an arc leading to each of its orthogonal neighbours.
Ignitions occur at nodes $(5, 5)$, $(10, 10)$, and $(15, 15)$ respectively.
We have an arrival time target of $\psi = 28$ minutes and an induced delay of $\Delta = 50$ minutes.
Three resources become available at $t = 10$ minutes, and another three at $t = 15$ minutes. 
Integer travel times for arcs are sampled from discrete uniform distributions $U(a, b)$ with endpoints (\textit{inclusive}) determined by the direction of that arc.
Parameters for $24$ instances are illustrated in Table~\ref{tbl:benchmark:params}.

\bigskip
\begin{center}{\small
	\setlength{\tabcolsep}{9pt}
	\begin{tabular}{ccccccc}
		& & \multicolumn{4}{c}{Travel Time Distributions} & \\
		\cmidrule{3-6}
		ID & Size & North & South & East & West & Ignition \\
		\cmidrule(l){1-7}
		0 & $10\times 10$  & $U$(7, 9) & $U$(2, 4) & $U$(4, 6) & $U$(6, 8) & (5, 5) \\
		1 & $\prime\prime$ & $U$(7, 9) & $U$(1, 3) & $U$(4, 6) & $U$(6, 8) & $\prime\prime$ \\
		2 &                & $U$(7, 9) & $U$(2, 4) & $U$(3, 5) & $U$(6, 8) &  \\
		3 &                & $U$(7, 9) & $U$(1, 3) & $U$(3, 5) & $U$(6, 8) &  \\
		4 &                & $U$(7, 9) & $U$(2, 4) & $U$(4, 6) & $U$(4, 6) &  \\
		5 &                & $U$(7, 9) & $U$(1, 3) & $U$(4, 6) & $U$(4, 6) &  \\
		6 & $\prime\prime$ & $U$(7, 9) & $U$(2, 4) & $U$(3, 5) & $U$(3, 5) & $\prime\prime$ \\
		7 & $10\times 10$  & $U$(7, 9) & $U$(1, 3) & $U$(3, 5) & $U$(3, 5) & (5, 5) \\
	8--15 & $20\times 20$  & \multicolumn{4}{c}{\textit{Repeat} 0--7}            & (10, 10) \\
   16--23 & $30\times 30$  & \multicolumn{4}{c}{\textit{Repeat} 0--7}            & (15, 15) \\
		\cmidrule(l){1-7}
	\end{tabular}
	\captionof{table}{Parameters for small instances based on \cite{MendesAlvelos2022}.} 
	\label{tbl:benchmark:params}}
\end{center}

The travel times broadly reflect South and South-easterly winds. Mendes and Alvelos observed that the MIP model failed to find feasible solutions for many of these instances. But we can help the model by pre-processing out nodes which are certain to be protected regardless of suppression resources.
In other words, we solve the subproblem associated with $z = 0$.
We then remove all nodes whose objective contribution is still zero.

Solution information for the pre-processed instances can be found in Table \ref{tbl:benchmark:results},
where we record the number of nodes, objective value, and runtime for both the MIP and LBBD, together with the number of Benders cuts generated.

\begin{center}{\small
	\setlength{\tabcolsep}{9pt}
	\begin{tabular}{c c c c c c c c}
		& & \multicolumn{2}{c}{MIP} & \multicolumn{4}{c}{LBBD}\\
		\cmidrule(r){3-4}\cmidrule(r){5-8}
		ID & $\lvert N\rvert$ & Obj & Time (s) & Obj & Time (s) & Opt Cuts & Feas Cuts  \\
		\cmidrule{1-8}
		0 & 50 & 38 & 0.16 & 38 & 0.01 & 24 & 0 \\
		1 & 53 & 40 & 0.28 & 40 & 0.03 & 41 & 1 \\
		2 & 54 & 43 & 0.29 & 43 & 0.07 & 48 & 0 \\
		3 & 57 & 44 & 0.18 & 44 & 0.02 & 43 & 0 \\
		4 & 61 & 44 & 0.25 & 44 & 0.05 & 51 & 0 \\
		5 & 62 & 47 & 0.41 & 47 & 0.01 & 20 & 0 \\
		6 & 68 & 52 & 1.13 & 52 & 0.03 & 47 & 0 \\
		7 & 69 & 54 & 1.46 & 54 & 0.04 & 72 & 0 \\
		8 & 77 & 48 & 1.86 & 48 & 0.03 & 56 & 0 \\
		9 & 88 & 58 & 2.11 & 58 & 0.02 & 65 & 0 \\
		10 & 86 & 54 & 2.71 & 54 & 0.02 & 63 & 0 \\
		11 & 99 & 68 & 4.95 & 68 & 0.13 & 141 & 0 \\
		12 & 88 & 56 & 10.24 & 56 & 0.08 & 101 & 1 \\
		13 & 91 & 57 & 8.51 & 57 & 0.08 & 108 & 1 \\
		14 & 111 & 77 & 21.42 & 77 & 0.3 & 158 & 1 \\
		15 & 141 & 107 & 87.94 & 107 & 0.4 & 338 & 1 \\
		16 & 73 & 38 & 3.87 & 38 & 0.02 & 42 & 0 \\
		17 & 98 & 60 & 3.39 & 60 & 0.02 & 84 & 0 \\
		18 & 82 & 45 & 3.16 & 45 & 0.03 & 78 & 0 \\
		19 & 124 & 81 & 15.95 & 81 & 0.07 & 96 & 0 \\
		20 & 79 & 51 & 4.61 & 51 & 0.06 & 66 & 0 \\
		21 & 123 & 77 & 17.16 & 77 & 0.05 & 138 & 0 \\
		22 & 106 & 75 & 26.63 & 75 & 0.15 & 187 & 0 \\
		23 & 153 & 102 & 31.05 & 102 & 0.2 & 154 & 0 \\
		\cmidrule(l){1-8}
	\end{tabular}
\captionof{table}{Computational Results on Instances from Table \ref{tbl:benchmark:params}. We report on the number of nodes after pre-processing, the objective value and runtime of the MIP and the LBBD method, as well as the number of Benders cuts generated.} 
\label{tbl:benchmark:results}}
\end{center}

\bigskip
We find that LBBD outperforms the strengthened MIP by around an order of magnitude. We repeated the experiment $9$ more times using different seeds to generate the travel times. The mean and standard deviations of the objective values and runtimes over these new instances can be found in Table \ref{tbl:statistics}, and performance profiles can be found in Table \ref{tbl:profiles}

\begin{center}{\small
		\setlength{\tabcolsep}{9pt}
		\begin{tabular}{c c c c c c c}
			& \multicolumn{2}{c}{Objective} & \multicolumn{2}{c}{MIP} & \multicolumn{2}{c}{LBBD}\\
			\cmidrule(r){2-3}\cmidrule(r){4-5}\cmidrule(r){6-7}
			ID & Av Obj & Obj SD & Av Time & Time SD & Av Time & Time SD \\
			\cmidrule{1-7}
			0 & 35.3 & 1.95 & 0.23 & 0.11 & 0.03 & 0.03 \\
			1 & 40.3 & 1.68 & 0.22 & 0.15 & 0.02 & 0.01 \\
			2 & 41.5 & 1.96 & 0.3 & 0.11 & 0.04 & 0.02 \\
			3 & 44.6 & 1.36 & 0.15 & 0.06 & 0.02 & 0.01 \\
			4 & 44.7 & 1.68 & 0.97 & 0.58 & 0.04 & 0.02 \\
			5 & 47.7 & 1.55 & 0.57 & 0.33 & 0.02 & 0.01 \\
			6 & 51.0 & 2.24 & 1.69 & 0.88 & 0.05 & 0.04 \\
			7 & 55.1 & 2.39 & 0.59 & 0.44 & 0.02 & 0.01 \\
			8 & 40.2 & 3.82 & 1.47 & 0.83 & 0.02 & 0.01 \\
			9 & 60.1 & 6.73 & 4.03 & 2.03 & 0.02 & 0.01 \\
			10 & 51.6 & 4.45 & 4.77 & 2.49 & 0.11 & 0.08 \\
			11 & 69.4 & 6.09 & 7.85 & 3.79 & 0.09 & 0.05 \\
			12 & 52.0 & 6.72 & 4.51 & 2.51 & 0.04 & 0.04 \\
			13 & 71.8 & 7.76 & 14.84 & 16.01 & 0.12 & 0.06 \\
			14 & 72.0 & 8.09 & 15.65 & 6.05 & 0.19 & 0.06 \\
			15 & 107.2 & 8.42 & 68.83 & 33.23 & 0.33 & 0.09 \\
			16 & 40.0 & 2.0 & 1.4 & 1.06 & 0.01 & 0.01 \\
			17 & 58.3 & 6.26 & 9.58 & 11.4 & 0.04 & 0.03 \\
			18 & 47.8 & 6.97 & 4.26 & 2.14 & 0.04 & 0.03 \\
			19 & 67.2 & 10.87 & 20.55 & 19.33 & 0.12 & 0.12 \\
			20 & 51.7 & 6.03 & 5.53 & 2.65 & 0.03 & 0.02 \\
			21 & 77.5 & 3.88 & 28.77 & 22.09 & 0.08 & 0.05 \\
			22 & 74.0 & 5.44 & 31.15 & 14.57 & 0.2 & 0.08 \\
			23 & 105.1 & 8.09 & 146.69 & 78.99 & 0.27 & 0.07 \\
			\cmidrule(l){1-7}
		\end{tabular}
		\captionof{table}{Average and standard deviation in objective values and runtimes over $10$ repetitions of the experiment with distinct seeds.} 
		\label{tbl:statistics}}
\end{center}

\begin{center}{\small
		\includegraphics[width=.75\linewidth]{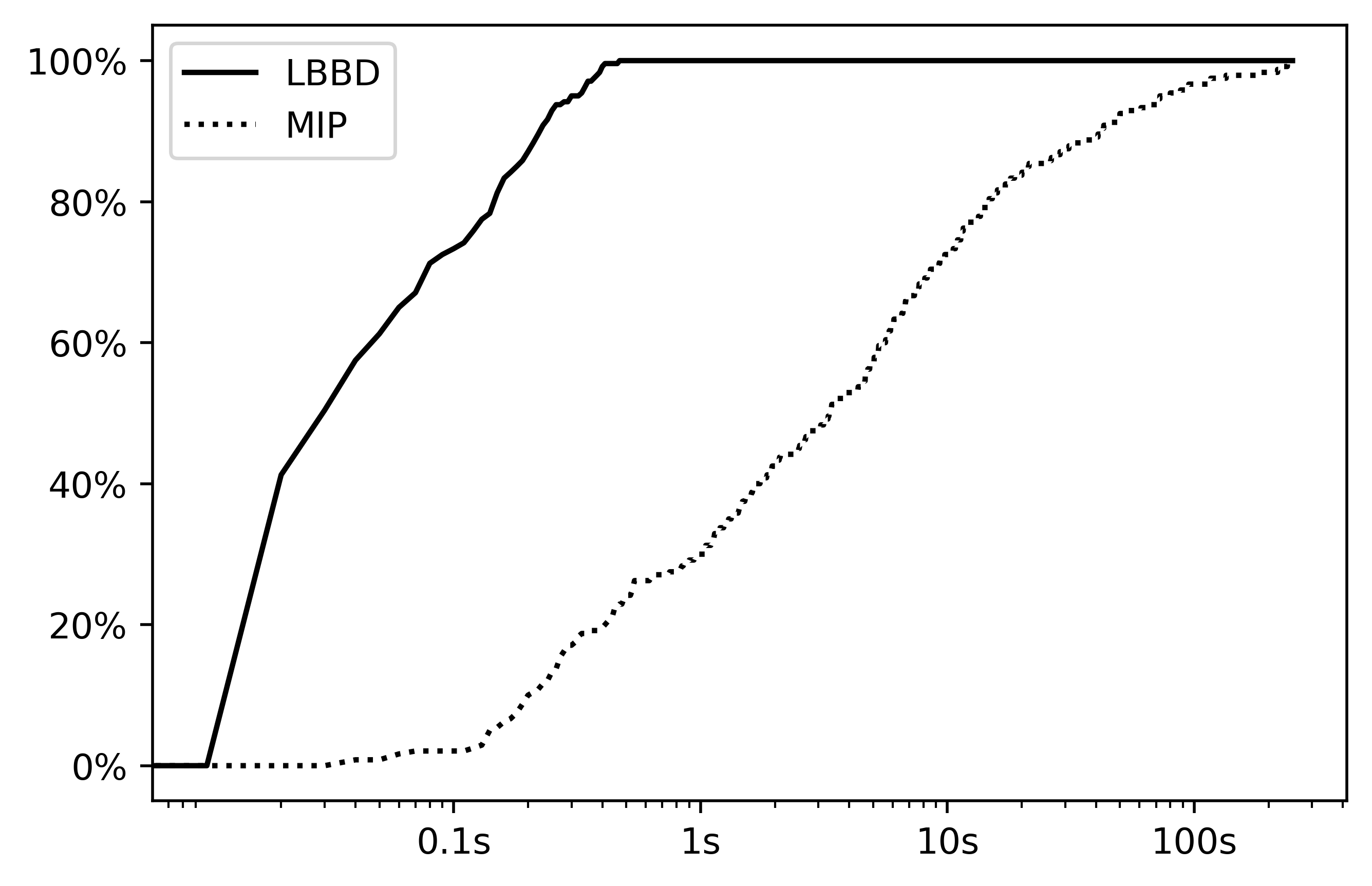}
		\captionof{table}{Logarithmic performance profiles for MIP and LBBD on the $240$ instances described above. The horizontal axis represents time in seconds, and the vertical axis represents the percentage of instances solved to optimality within that time.} 
		\label{tbl:profiles}}
\end{center}

The results of Table \ref{tbl:statistics} make it clear that an arrival time target of $\psi = 28$ is not large enough for the $30\times 30$ instances to be substantially distinct from the $20\times 20$ instances. For the $20\times 20$ instances, already far fewer than the maximum of $400$ nodes remain after pre-processing. See Figure \ref{fig:20by20example}:
\begin{center}{\small
		\includegraphics[width=.55\linewidth]{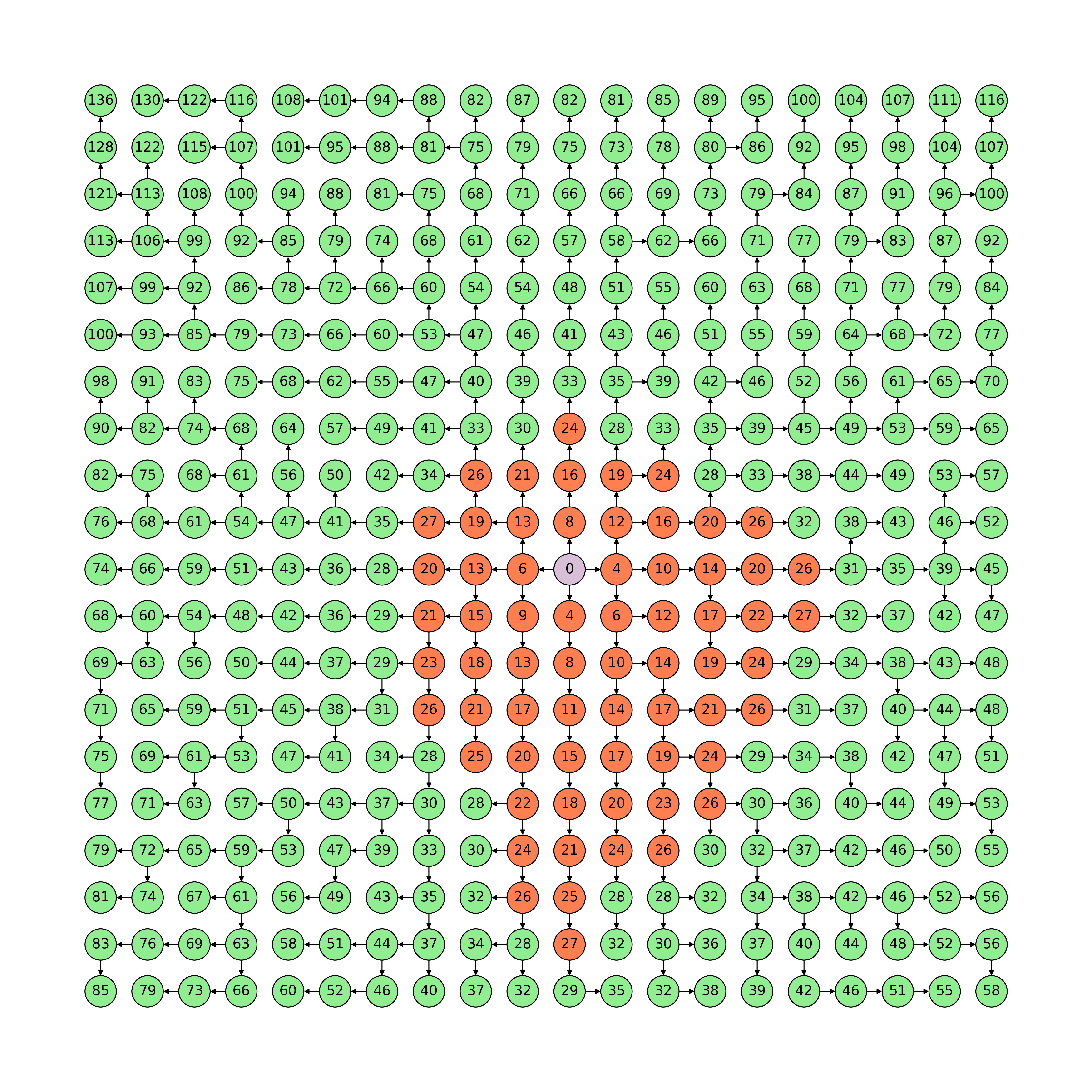}
		\captionof{figure}{The binding fire paths and safe nodes with no interdictions for a typical $20\times 20$ instance.} 
		\label{fig:20by20example}}
\end{center}

In this case, only $16.5\%$ of the nodes remain in the pre-processed network.
To generate more difficult instances, we want to increase the arrival time threshold to the extent that the majority of the network is genuinely at risk. We also want to increase the number of available resources, and therefore the number of feasible solutions. Finally, we can decrease the induced delays, which increases the expected number of new interdictions needed to protect a node. Sixteen new instances---L0A to L7A and L0B to L7B---are described in Table \ref{tbl:benchmark:params2}.

\medskip
\begin{center}{\small
		\setlength{\tabcolsep}{9pt}
		\begin{tabular}{ccccccc}
			& & \multicolumn{4}{c}{Travel Time Distributions} & \\
			\cmidrule{3-6}
			ID & Size & North & South & East & West & Ignition  \\
			\cmidrule(l){1-7}
			L0--7 & $20\times 20$ & \multicolumn{4}{c}{As in Table \ref{tbl:benchmark:params}} & (10, 10) \\
			\cmidrule(l){1-7}
		\end{tabular}
	
		\smallskip
		\begin{tabular}{ccccccccc}
		& \multicolumn{6}{c}{Resources Per Time Period} & & \\
		\cmidrule{2-7}
		Type & $10$ & $20$ & $30$ & $40$ & $50$ & $60$ & $\psi$ & $\Delta$ \\
		\cmidrule(l){1-9}
		A & $3$ & $3$ & $3$ & $3$ & 0 & 0 & $70$ & $50$ \\
		B & $3$ & $3$ & $3$ & $3$ & $3$ & $3$ & $70$ & $30$ \\
		\cmidrule(l){1-9}
		\end{tabular}
		\captionof{table}{Large Instance Parameters.} 
		\label{tbl:benchmark:params2}}
\end{center}

In order to compare it to the LBBD approach, we replicated the ILS metaheuristic of \cite{MendesAlvelos2022}.
One expects the metaheuristic to find decent feasible solutions more rapidly than the exact methods, since MIP solvers spend a significant proportion of the available computational resources tightening the lower bound. But with LBBD it is possible to warm start the master problem with the best known heuristic solution. To ensure a fair comparison, we only warm start the exact LBBD method with the greedy LBBD solution, even if ILS produced a better one. Table \ref{tbl:benchmark:results2} shows the best objective values and runtimes of each method. Of the two heuristics and the two exact methods, the winning objective values have been bolded.

\medskip
\begin{center}{\small
	\setlength{\tabcolsep}{9pt}
	\begin{tabular}{c c c c c c c c c c c c}
		& & \multicolumn{4}{c}{Heuristics} & \multicolumn{6}{c}{Exact methods} \\
		\cmidrule(r){3-6}\cmidrule(r){7-12}
		& & \multicolumn{2}{c}{ILS {(M\&A22)}} & \multicolumn{2}{c}{Gr LBBD} & \multicolumn{3}{c}{Exact LBBD} & \multicolumn{3}{c}{MIP} \\
		\cmidrule(r){3-4}\cmidrule(r){5-6}\cmidrule(r){7-9}\cmidrule(r){10-12}
		ID & $\lvert N\rvert$ & Obj & Time (s) & Obj & Time & Obj & LB & Time & Obj & LB & Time \\
		\cmidrule{1-12} 
		L0A & 289 & \textbf{189} & 255.67 & 193 & 7.56 & \textbf{189} & 189.0 & 156.67 & 236.0 & 111.0 & 7200 \\
		L1A & 294 & 198 & 279.05 & \textbf{189} & 5.29 & \textbf{189} & 189.0 & 44.58 & 245.0 & 143.0 & 7200 \\
		L2A & 282 & \textbf{190} & 228.70 & 193 & 11.94 & \textbf{190} & 190.0 & 144.73 & -- & -- & -- \\
		L3A & 294 & \textbf{222} & 234.73 & 225 & 28.13 & \textbf{207} & 207.0 & 84.47 & 223.0 & 156.0 & 7200 \\
		L4A & 317 & \textbf{216} & 222.56 & 243 & 14.72 & \textbf{216} & 216.0 & 143.84 & -- & -- & -- \\
		L5A & 312 & \textbf{230} & 305.06 & 252 & 34.47 & \textbf{226} & 226.0 & 139.49 & 237.0 & 168.0 & 7200 \\
		L6A & 331 & \textbf{239} & 339.11 & 269 & 59.31 & \textbf{239} & 239.0 & 195.00 & 288.0 & 165.0 & 7200 \\
		L7A & 327 & \textbf{259} & 311.77 & 266 & 28.07 & \textbf{246} & 246.0 & 355.65 & 278.0 & 188.0 & 7200 \\
		L0B & 289 & \textbf{195} & 269.54 & 202 & 13.19 & \textbf{195} & 195.0 & 2515.69 & 215.0 & 103.0 & 7200 \\
		L1B & 294 & 209 & 240.23 & \textbf{201} & 5.47 & \textbf{196} & 196.0 & 488.00  & 206.0 & 129.0 & 7200 \\
		L2B & 282 & 210 & 289.58 & \textbf{199} & 9.89 & \textbf{196} & 196.0 & 3626.87 & 200.0 & 121.0 & 7200 \\
		L3B & 294 & 226 & 442.36 & \textbf{223} & 18.59 & \textbf{213} & 213.0 & 4850.95 & 229.0 & 141.0 & 7200 \\
		L4B & 317 & \textbf{229} & 294.07 & 242 & 11.88 & \textbf{226} & 226.0 & 1790.25 & 262.0 & 126.0 & 7200 \\
		L5B & 312 & 251 & 456.39 & \textbf{245} & 26.19 & \textbf{235} & 235.0 & 5163.17 & -- & -- & -- \\
		L6B & 331 & \textbf{268} & 398.97 & 272 & 28.92 & \textbf{249} & 249.0 & 6264.50 & 279.0 & 168.0 & 7200 \\
		L7B & 327 & \textbf{255} & 378.99 & 268 & 17.50 & \textbf{254} & \textcolor{red}{250.0} & 7200 & 307.0 & 185.0 & 7200 \\
		\cmidrule{1-12}
	\end{tabular}
	\captionof{table}{Computational results for the large instances from Table \ref{tbl:benchmark:params2}. We report the objective value and runtime of (i) our implementation of the ILS metaheuristic of \cite{MendesAlvelos2022}, (ii) the greedy LBBD heuristic, (iii) the exact LBBD algorithm, and (iv) the direct MIP, together with all proven lower bounds. The red text indicates the instance for which optimality was not proved within the time limit.} 
	\label{tbl:benchmark:results2}}
\end{center}

Several observations can now be made. First we note that Mendes and Alvelos' ILS metaheuristic produces better objective values than the greedy LBBD algorithm most of the time. The metaheuristic was even able to find the optimal solution to five instances; something accomplished by the greedy algorithm only once. On the other hand, the exact LBBD algorithm not only found the optimal solution, but proved optimality in all but one instance, L7B. Even then we were able to prove near-optimality using the lower bound. The MIP model failed to achieve a respectable optimality gap for any instance, and exceeded the memory limit three times.

\section{Conclusion}
\label{sec:conclusion}In this paper we have proposed a novel formulation and solution approach for a wildfire suppression problem. The new formulation overcomes a number of the inherent difficulties with the problem, and expands the range of problem sizes which can conceivably be solved to provable optimality. We expect that on truly huge instances, the best approach will be to warm-start the LBBD algorithm with the ILS metaheuristic of Mendes and Alvelos, or a more intricate rolling horizon heuristic with look-ahead. Future work includes applying the new method to instances derived from real landscapes, with parameters estimated based on the underlying fuel load, elevation pattern, and forecast weather. We may also consider more dense networks, perhaps arising from a hexagonal decomposition, or diagonal arcs. It would also be interesting to consider a stochastic extension of the problem that allows the fire travel times to be random variables that depend on the wind, and change over time. There may also be scope for applying LBBD to different but related wildfire suppression problems.

\section*{Acknowledgments}
Mitchell Harris is supported by an Australian Government Research Training Program (RTP) scholarship.

\addcontentsline{toc}{section}{References}
\bibliography{fireLBBD}

\appendix
\section{A Strengthened MIP Formulation}
\label{sec:MIP}In this section we modify the MIP from \cite{Alvelos2018, MendesAlvelos2022} and eliminate some symmetries.
For simplicity, we assume that there is a single ignition node, say $I_0$. This assumption can be made without loss of generality using the standard super-source trick.
The first difference between our MIP and Alvelos' is that we have removed the $r$ index from the resource allocation variables (the $z$ variables).
This is possible because the suppression resources are identical.
We do not care \textit{which} resource is allocated to a node, only \textit{whether} one is allocated.
An $r$ index can be re-introduced in the case of heterogeneous resources.
We have also omitted a number of inventory variables and constraints that let unused resources be carried over into future time periods.
If the fire has not reached a node yet in time period $t$, then it has not reached that node yet at any earlier time.
Thus, if we would allocate an unused resource to a node in a later time period, there is no reason not to allocate it to that node as soon as the resource becomes available.
We omit a constraint that prevents the $y_n$ variables from erroneously equalling $1$, since that is ensured by virtue of minimisation.
Finally we have omitted an $\eps$ expression from the objective function that would prohibit the allocation of redundant resources. We did this for the sake of an integer objective value, but it is easy to reintroduce.
With these observations in mind, we introduce the following decision variables:
\begin{equation}
\begin{tabular}{rl}
$x_a \in \mathbb{R}_+$ & The number of binding fire paths that use arc $a \in A$, \\
$\lambda_n \in \mathbb{R}_+$ & The arrival time of fire at node $n \in N$, \\
$s_a \in \mathbb{R}_+$ & An explicit slack variable for each $a \in A$, \\
$q_a \in \{0, 1\}$ & $=1$ iff arc $a$ belongs to a binding fire path, \\
$z_{nt} \in \{0, 1\}$ & $=1$ iff there is a resource at node $n \in N,\,n \neq I_0$ at time $t \in T$, \\
$y_n \in \{0, 1\}$ & $=1$ iff node $n \in N$ is not protected.
\end{tabular} \label{mip:variables}
\end{equation}
Put
\begin{equation*}
M = (\lvert N\rvert - 1)\max\{c_a : a \in A\} + \left(\sum_{t \in T}a_t\; - 1\right)\Delta.
\end{equation*}
For each node $n \in N$, $\inarcs(n)$ and $\outarcs(n)$ denote the sets of arcs entering and leaving node $n$ respectively.
A MIP formulation of the problem is given by
{\begin{alignat}{2}
\min \quad & \sum_{n \in N}y_n &\quad & \label{mip:obj}\\
\st \quad &
\sum_{a \in \outarcs(I_0)}x_a = \lvert N\rvert - 1, & & \label{mip:con1}\\
& \sum_{a \in \inarcs(n)}x_a - \sum_{a \in \outarcs(n)}x_a = 1 & & \forall n \in N,\, n \neq I_0, \label{mip:con2}\\
& \lambda_j - \lambda_i + s_{ij} = c_{ij} + \Delta \sum_{t \in T}z_{it} & & \forall ij \in A, \label{mip:con3}\\
& \lambda_{I_0} = 0, \label{mip:con4}\\
& x_a \leq (\lvert N\rvert - 1)q_a & & \forall a \in A, \label{mip:con5}\\
& s_a \leq M(1 - q_a) & & \forall a \in A, \label{mip:con6}\\
& \sum_{n \in N}z_{nt} \leq a_t & & \forall n \in N,\, n \neq I_0, \label{mip:con7}\\
& \sum_{t \in T}z_{nt} \leq 1 & & \forall t \in T, \label{mip:con8}\\
& z_{nt} \leq 1 + \frac{\lambda_n - t}{t} & & \forall n \in N,\,n \neq I_0,\, t \in T, \label{mip:con9} \\
& y_n \geq \frac{\psi - \lambda_n}{\psi} & & \forall n \in N, \label{mip:con10} \\
& \text{Variable domains given by }(\textup{\ref{mip:variables}}). \label{mip:con11}
\end{alignat}}

The objective function minimises the number of nodes which we fail to protect. Constraints (\ref{mip:con1}~and~\ref{mip:con2}) are the constraints of the shortest path tree problem. Constraints (\ref{mip:con3}~and~\ref{mip:con4}) are the dual constraints associated with the shortest path tree variables with the updated travel times. Constraints (\ref{mip:con5} and \ref{mip:con6}) then enforce complementary slackness and ensure that the $\lambda$ variables represent the correct arrival times. Constraint (\ref{mip:con7}) limits the number of resources available in each time period. Constraint (\ref{mip:con8}) ensures we allocate at most one resource to each node. Constraint (\ref{mip:con9}) ensures that we do not locate a resource at a node that is already on fire. Constraint (\ref{mip:con10}) forces $y_n$ to be $1$ if the fire arrives at $n$ strictly sooner than $\psi$. Constraint (\ref{mip:con11}) states that the domains of the variables are described by (\ref{mip:variables}). We refer the reader to the aforementioned studies for a fuller explanation of this formulation.

\end{document}